\newtheorem{thm}{Theorem}
\theoremstyle{definition}
\newtheorem{exmp}{Example}
\newtheorem{assumpA}{Assumptions}
\def\index#1{}
\begin{document}

\begin{frontmatter}
\pretitle{Research Article}

\title{Testing proportional hazards for specified covariates}
\author{\inits{V.}\fnms{Vilijandas}~\snm{Bagdonavi\v{c}ius}\thanksref
{cor1}\ead[label=e1]{vilijandas.bagdonavicius@mif.vu.lt}}
\author{\inits{R.}\fnms{R\={u}ta}~\snm{Levulien\.{e}}\ead
[label=e2]{ruta.levuliene@mif.vu.lt}}
\thankstext[type=corresp,id=cor1]{Corresponding author.}
\address{Faculty of Mathematics and Informatics, \institution{Vilnius
University}, Naugarduko 24,
Vilnius~LT-03225, \cny{Lithuania}}

%\markboth{Authors}{Title}
\markboth{V. Bagdonavi\v{c}ius, R. Levulien\.{e}}{Testing proportional
hazards for specified covariates}

\begin{abstract}
%We propose t
Tests for proportional hazards assumption concerning
specified covariates or groups of covariates are proposed. The class of
alternatives is wide: log-hazard rates under different values of
covariates may cross, approach, go away. The data may be right
censored. The limit distribution of the test statistic is derived.
Power of the test against approaching alternatives is given. Real
data examples are considered.
\end{abstract}
\begin{keywords}
\kwd{Cox proportional hazards model}
\kwd{non-proportional hazards}
\kwd{right censored data}
\kwd{asymptotic distribution}
\end{keywords}
\begin{keywords}[MSC2010]%
\kwd{62N01}
\kwd{62N02}
\kwd{62N03}
\kwd{62E20}
\end{keywords}

\received{\sday{2} \smonth{10} \syear{2018}}% Updated by
%VTEXPTS2LaTeX.exe, 07.02.2019 07:57
\revised{\sday{9} \smonth{1} \syear{2019}}% Updated by
%VTEXPTS2LaTeX.exe, 07.02.2019 07:57
\accepted{\sday{9} \smonth{1} \syear{2019}}% Updated by
%VTEXPTS2LaTeX.exe, 07.02.2019 07:57
\publishedonline{\sday{15} \smonth{2} \syear{2019}}
\end{frontmatter}

%\begin{appendix}
%\end{appendix}

%\begin{funding}
%\gsponsor[id=,sponsor-id=]{}
%\gnumber[refid=]{}
%\end{funding}
%

\section{Introduction}

The most known semi-parametric model for analysis of failure time
regression data\index{regression data}
is the proportional hazards\index{proportional hazards (PH)} (PH) model.
There are many tests for the PH model from right censored data given by
Cox~\cite{Cox1972}, Moreau et\,al.~\cite{Moreau1985}, Lin~\cite{Lin1991}, Nagelkerke et\,al. \cite{Nagelkerke1984},
Grambsch and Therneau~\cite{Grambsch1994}, Quantin et\,al.~\cite{Quantin1996}, Bagdonavi\v{c}ius {et\,al.}~\cite{Bagdonavicius2004}.
Tests for specified covariates are given in Kvaloy and Neef~\cite{Kvaloy2004}, Kraus~\cite{Kraus2008}.

We consider tests for proportional hazards assumption concerning
specified covariates or groups of covariates.\index{covariate} The class of alternatives
is wide: log-hazard rates under different values of covariates\index{covariate} may
cross, approach, go away. The data may be right censored. The limit
distribution of the test statistics
is derived. Power of the test under approaching alternatives\index{approaching alternatives} is given.
Real examples are considered.

\section{Modeling non-proportional hazards for specified covariates}

Let $S(t|z)$,
$\lambda(t|z)$ and $ \Lambda(t|z)$ be the survival, hazard
rate and cumulative hazard functions\index{cumulative hazard functions} under a
$m$-dimensional explanatory variable (or covariate\index{covariate}) $z=(z_1,\ldots,z_m)^T$.

Let us consider the proportional hazards\index{proportional hazards (PH)} (PH) model:
\[
\lambda(t|z)=e^{\beta^Tz}\, \lambda(t),
\]
where
$\lambda(t)$ is unknown baseline hazard function, the parameter
$\beta=(\beta_1,\ldots,\beta_m)^T$ is $m$-dimensional.

Under the PH model the ratios of hazard functions\index{hazard functions} under any two
different explanatory variables are constant over time, i.e. they are
proportional.

Suppose that proportionality of hazard functions\index{hazard functions} with respect to a
specified covariate $z_j$ or a group of covariates
$z_{j_1},\ldots,z_{j_k}$ may be violated.
Our purpose is to detect such violations. So we seek a wider model
which includes not only possibility of constant hazard rates\index{hazard rates} ratios but also
time-varying ratios.

We propose a model of the form
\begin{equation}
\label{model}
\lambda(t|z)=g\bigl(z,\Lambda(t),\beta,\gamma_j\bigr)
\, \lambda(t), \quad\Lambda(t)=\int_0^t
\lambda(u)du,
\end{equation}
where
\[
g(z,\Lambda(t),\beta,\gamma_j)=\frac{e^{\beta^Tz+\Lambda(t)e^{\gamma_j
z_j}}}{1+e^{\gamma_j z_j}[e^{\Lambda(t)e^{\gamma_j z_j}}-1]}.
\]
$\beta=(\beta_1,\ldots,\beta_m)^T$ and $\gamma_j$ are unknown regression
parameters, $\lambda(t)$ and $\Lambda(t)$ are
unknown baseline hazard and cumulative hazard, respectively\xch{.}{..}

The PH model is a particular case of this model when $\gamma_j=0$. The
model (\ref{model}) is very wide %with respect
as compared to the PH model.

Indeed, suppose that two different values $z^{(1)}$ and $z^{(2)}$ of
the covariate vector\index{covariate ! vector} differ only by the value of the covariate\index{covariate} $z_j$
and denote by $c(t)$
the ratio of hazard rates.\index{hazard rates} For the model (\ref{model})
\[
c(0)=e^{\beta_j(z_j^{(2)}-z_j^{(1)})},\qquad c(\infty)=e^{(\beta_j-\gamma
_j)(z_j^{(2)}-z_j^{(1)})}.
\]
So this model gives a large choice of alternatives: the ratio of hazard
rates $c(t)$\index{hazard rates} can vary from any $a>0$ to any $b>0$, whereas $c(t)$ is
constant under the PH model. The difference of logarithms of hazard
rates\index{hazard rates} under different values of the covariate\index{covariate} $z_j$ is constant under
the PH model, so the logarithms of hazard rates\index{hazard rates} as time
functions are
parallel, whereas in dependence on the values of its parameters the
model (\ref{model}) includes also the possibilities for these functions to
approach, to go away, to intersect. So the model (\ref{model}) may help to detect
non-proportionality of hazard rates\index{hazard rates} (or, equivalently, non-parallelism
of log-hazard functions) in above mentioned directions. Other
directions are very rare in real data.

We do not discuss application of the model for analysis of survival
regression data (which is a subject of another article). Such analysis
could be done if the PH model would be rejected. Here the model is used
only as a generator of a wide class of alternatives to the PH model.

\section{Test statistic}

Let us consider right censored failure time regression data:\index{regression data}
\[
(X_1,\delta_1,z^{(1)}),\ldots, (X_n,\delta_n,z^{(n)}),
\]
where
\[
X_i=T_i \wedge C_i,\quad\delta_i=\mathbf{1}_{{\{T_i\leq
C_i\}}},
\]
$T_i$ are failure times and $C_i$ are censoring times.\index{censoring time} $X_i$ is
observation time of the $i$th unit, the event $\delta_i=1$ indicates
that $X_i$ is the failure time $T_i$, and the event $\delta_i=0$
indicates that $X_i$ is the censoring time $C_i$.\index{censoring time}

Set
\begin{alignat*}{4}
N_i(t) &{}= \mathbf{1}_{\{X_i\leq t,\delta_i=1\}},& Y_i(t)&{}=\mathbf{1}_{\{X_i\geq t\}},\\
N(t) &{} =\sum_{i=1}^nN_i(t),& \quad Y(t)&{}=\sum_{i=1}^nY_i(t);
\end{alignat*}
here
$\mathbf{1}_A$ denotes the indicator of the event $A$. The processes
$N_i(t)$ and $N(t)$ are right-continuous counting process showing the
numbers of observed failures in the time interval $[0,t]$ for the
$i$th unit and for all $n$ units, respectively. $Y_i(t)$ and $Y(t)$ are
decreasing (not strongly) left-continuous stochastic processes showing
the numbers of units which are still not-failed and not-censored just
prior to $t$ for the $i$th and for all units, respectively.

Suppose
that the survival distribution of the $i$ object given $z^{(i)}$ is
absolutely continuous with the survival functions $S_i(t)$ and the
hazard rates\index{hazard rates} $\lambda_i(t)$.

Suppose that the multiplicative intensities\index{multiplicative intensities} model is
verified: the compensators of the counting processes $N_i$ with
respect to the history of the observed processes are $\int
Y_i\lambda_i du$. It is equivalent to the assumption that
for any $i$ and $t$: $P(X_i > t)>0$ with almost all
$s\in[0,t]$
\begin{align*}
&{} \lim_{h \downarrow 0}\frac{1}{h}P\bigl\{T_i \in[s,s+h)|X_i \geq s,x^{(i)}\bigr\}\\
&\quad {}= \lim_{h
\downarrow0}\frac{1}{h}P\bigl\{T_i \in[s,s+h)|T_i \geq s,x^{(i)}\bigr\}=\lambda_i(s).
\end{align*}
It means that for almost all $s\in[0,t]$ the risk to fail just after
the time $s$ given that units were non-censored and did not fail to
time $s$ is equal
to the risk to fail just after the time $s$ given that units did not
fail to time $s$ when censoring\index{censoring} does not exist.
So censoring\index{censoring} has no influence to the risk of failure.

Information about time-to-failure distribution contains the points,
where the counting processes $N_i$ have jumps.
A jump at the point $t$ is possible if $Y_i(t)=1$. Partial information
give the points where the counting processes $N_i$ have not jumps
but the predictable processes have jumps.

The multiplicative intensities\index{multiplicative intensities}
model is verified in the case of type I, type II, independent
random censoring.\index{independent random censoring}

In the parametric case with known $\lambda$ the unknown
finite-dimensional parameter consists of
the parameters $\beta$ and
$\gamma_j$. The component of the parametric score function $U$
corresponding to $\gamma_j$ has the form
\begin{equation}\label{eq2}
U_{j}(\beta,\gamma_j)=\sum_{i=1}^{n}
\int^\infty_0\bigl\{w_j^{(i)}(u,
\Lambda,\beta,\gamma_j)-\tilde{E}_j(u, \Lambda,\beta,
\gamma_j)\bigr\}\,dN_i(u),
\end{equation}
where
\begin{gather*}
w_j^{(i)}(v,\Lambda,\beta,\gamma_j)=\frac{\partial}{\partial\gamma_j}
\log\bigl\{g(z^{(i)}, \Lambda(v),\beta,\gamma_j)\bigr\},
\\
\tilde{E}(v,\Lambda,\beta,\gamma_j)=\frac{\tilde{S}^{(1)}(v,\Lambda
,\beta,\gamma_j)}{\tilde{S}^{(0)}(v,\Lambda,\beta,\gamma_j)},
\\
\tilde{S}^{(0)}(v,\Lambda,\beta,\gamma_j)= \sum^n_{i=1}Y_i(v)g\bigl\{
z^{(i)}, \Lambda(v),\beta,\gamma_j\bigr\},
\\
\tilde{S}^{(1)}(v,\Lambda,\beta,\gamma_j)=\sum^n_{i=1}
Y_i(v)\,\frac{\partial}{\partial\gamma_j} g\bigl\{z^{(i)}, \Lambda(v),\beta
,\gamma_j\bigr\}.
\end{gather*}
We consider semiparametric case when the baseline hazard rate $\lambda$
is unknown.
The test statistic is constructed in the following way. In the
expression of $U_j$ the parameter
$\beta$ is replaced by its partial likelihood estimator
$\hat\beta$ under the Cox model,\index{Cox model} the parameter
$\gamma_j$ is replaced by $0$, and the baseline cumulative intensity
$\Lambda$ is replaced by the
Breslow estimator (see Andersen \xch{{et\,al.}}{{\it et al}}~\cite{Andersen1993})
\[
\hat\Lambda(t)=\int_0^t\frac{d
N(u)}{S^{(0)}(u,\hat\beta)}.
\]
The estimator $\hat\beta$ verifies the equation
\begin{equation}
\label{esteq} \sum_{i=1}^n\int
_0^{\infty} \bigl\{z^{(i)}-E(t,\hat\beta)
\bigr\} dN_i(t)=0,
\end{equation}
where
\begin{gather*}
E(t,\beta)=\frac{S^{(1)}(t,\beta)}{S^{(0)}(t,\beta)},\\
{S}^{(0)}(t,\beta)=\sum^n_{i=1}Y_i(t)e^{\beta^Tz^{(i)}},\qquad
{S}^{(1)}(t,\beta)=\sum^n_{i=1}z^{(i)}Y_i(t)e^{\beta^Tz^{(i)}}.
\end{gather*}
Set $\hat F(t)=1-e^{-\hat\Lambda(t)}$. It is an estimator of the
baseline cumulative distribution function. After the above mentioned
replacement the following statistic is obtained:
\begin{equation}
\label{u_gamma} \hat U_{\gamma_j}=-\sum_{i=1}^n
\int_0^{\infty}\hat F(t) \bigl\{z_j^{(i)}-E_{j}(t,
\hat\beta)\bigr\} dN_i(t),
\end{equation}
where $E_j$ is the component of $E$ corresponding to the covariate\index{covariate}
$z_j$.\goodbreak

Let us consider asymptotic distribution of the statistic (\ref{u_gamma}) under the
PH model. Set
\begin{gather*}
{S}^{(2)}(u,\beta)=\sum^n_{i=1}(z^{(i)})^{\otimes2}Y_i(u)e^{\beta^Tz^{(i)}},\\
V(u,\beta)={{S}^{(2)}(u,\beta)}/{{S}^{(0)}(u,\beta)}-{E}^{\otimes2}(u,\beta),
\end{gather*}
where $A^{\otimes2}=AA^T$ for any matrix $A$. Denote by $V_j(u,\beta)$ the
$j$th column and by $V_{jj}(u,\beta)$ the $j$th
diagonal element of the matrix $V(u,\beta)$. Set
\begin{gather*}
\hat\Sigma_{jj}(t)=n^{-1}\int^{t}_0\hat F(u)V_{jj}(u,\hat\beta)d
N(u),\qquad\hat\Sigma_{j}(t)=n^{-1}\int^{t}_0\hat F(u)V_{j}(u,\hat\beta
)d N(u),
\\
\hat\Sigma(t)=n^{-1}\int^{t}_0V(u,\hat\beta)d N(u).
\end{gather*}
\begin{assumpA}\label{assumpA}
\begin{enumerate}
\item[a)] $(X_i,\delta_i,z^{(i)})$ are \xch{i.i.d.,}{i.i.d.}
\item[b)] $\sup\{t:Y_i(t)>0\}\stackrel{P}\rightarrow\tau>0$,
\item[c)] the covariates $z^{(i)}$ are bounded: $\xch{\|}{||}z^{(i)}\xch{\|}{||}\leq C$, $C>0$,
\item[d)] $\Lambda(\tau)<\infty$,
\item[e)] the matrix
\[
\Sigma(\beta)=\int_0^\tau v(u,\beta)s^{(0)}(u,\beta)d\Lambda(u)
\]
is positively definite; here
\begin{gather*}
s^{(i)}(t,\beta)=E S^{(i)}(t,\beta)/n, \qquad e(u,\beta)=s^{(1)}(u,\beta
)/s^{(0)}(u,\beta)\quad i=0,1,2,
\\
v(u,\beta)=s^{(2)}(u,\beta)/s^{(0)}(u,\beta)-e(u,\beta)(e(u,\beta))^T.
\end{gather*}
\end{enumerate}
\end{assumpA}

Assumption a) can be weakened considerably but we avoid writing
complicated formulas for easier reading. Assumption b) simply means
that at some finite time moment (perhaps very remote) observations are
stopped. This is a usual assumption for
asymptotic results to hold in survival analysis. Assumption c) also can
be weakened.
Assumption d) means that if censoring\index{censoring} would be absent then units might
survive after the moment $\tau$ with positive probability.
Assumption e) is needed to have non-degenerated asymptotic distribution
of the test statistic. This assumption is the usual assumption needed for
asymptotic normality of the maximum partial likelihood estimator $\hat
\beta$ of the regression parameter $\beta$.

Under Assumptions \ref{assumpA}~\cite{AndGill1982} there exists a neighborhood
$\Theta$ of $\beta$ such that
\begin{gather}
\sup_{b\in\Theta,u\in[0,\tau]}\biggl\Vert\frac {1}{n}S^{(i)}(u,b)-s^{(i)}(u,b)\biggr\Vert\stackrel{P} \rightarrow0 ,\nonumber\\
\sup_{b\in\Theta,u\in[0,\tau]}\bigl\Vert E(u,b)-e(u,b)\bigr\Vert\stackrel{P} \rightarrow0,\nonumber\\
\sup_{b\in\Theta,u\in[0,\tau]}\bigl\Vert V(u,b)-v(u,b)\bigl\Vert \stackrel{P} \rightarrow0 ,\label{eq5}
\end{gather}
{as} $n\to\infty$.

Assumption~\ref{assumpA}\,a) may be weakened assuming
%instead
that there exist
non-random $s^{(i)}$ such that the convergences\index{convergences} (\ref{eq5}) hold.

Convergences\index{convergences} (\ref{eq5}) and Assumption~\ref{assumpA}\,d) imply that
\begin{gather*}
\sup_{b\in\Theta,t\in[0,\tau]}\biggl\Vert\int_0^t(\frac{1}{n}S^{(i)}(u,b)-s^{(i)}(u,b))d\Lambda(u)\biggr\Vert=o_P(1),
\\
\sup_{b\in\Theta,t\in[0,\tau]}\biggl\Vert\int_0^t( E(u,b)-e(u,b))d\Lambda(u)\biggr\Vert=o_P(1),
\\
\sup_{b\in\Theta,t\in[0,\tau]}\biggl\Vert\int_0^t( V(u,b)-v(u,b))d\Lambda(u)\biggr\Vert=o_P(1).
\end{gather*}

\begin{thm}\label{thm1} Under Assumptions~\ref{assumpA} the following weak convergence holds:
\begin{equation}
\label{statistics} T=n^{-1/2}\hat U_{\gamma_j}/\sqrt{\hat
D_j}\stackrel{ {d}}\rightarrow N(0,1),\quad\mbox{as $n\to\infty$},
\end{equation}
where
\begin{equation}\label{eq7}
\hat D_j=\hat\Sigma_{jj}(\tau)-\hat\Sigma^T_{j}(
\tau)\hat\Sigma ^{-1}(\tau)\hat\Sigma_{j}(\tau).
\end{equation}
\end{thm}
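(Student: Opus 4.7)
The plan is to exploit the martingale structure under the PH null $\gamma_j=0$, linearize $\hat U_{\gamma_j}$ in $\hat\beta$ around $\beta$, and apply Rebolledo's martingale central limit theorem. Under the null, the processes $M_i(t)=N_i(t)-\int_0^t Y_i(u)e^{\beta^T z^{(i)}}d\Lambda(u)$ are orthogonal square-integrable martingales, and the pathwise identity $\sum_i (z_j^{(i)}-E_j(u,\beta))Y_i(u)e^{\beta^T z^{(i)}}=S_j^{(1)}(u,\beta)-E_j(u,\beta)S^{(0)}(u,\beta)\equiv 0$ yields, for any weight $w(\cdot)$ (even non-predictable ones such as $\hat F$),
\[
\sum_i\int w(u)(z_j^{(i)}-E_j(u,\beta))\,dN_i(u)=\sum_i\int w(u)(z_j^{(i)}-E_j(u,\beta))\,dM_i(u).
\]

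Next I would Taylor-expand $E_j(u,\hat\beta)$ around $\beta$, using $\partial E_j(u,\beta)/\partial\beta=V_j(u,\beta)$, and separately expand the score equation (\ref{esteq}) to obtain the standard Andersen--Gill representation
\[
n^{1/2}(\hat\beta-\beta)=\Sigma^{-1}(\beta)\, n^{-1/2}\sum_i\int(z^{(i)}-e(u,\beta))\,dM_i(u)+o_P(1).
\]
Combining the two expansions, applying the identity above to the leading piece, and invoking the uniform convergences (\ref{eq5}) together with consistency of the Breslow estimator ($\hat F\to F=1-e^{-\Lambda}$ uniformly in probability, and $n^{-1}\int\hat F V_j(u,\beta^*)dN\to\Sigma_j(\tau)$ in probability, where $\Sigma_j(\tau)$ is the probability limit of $\hat\Sigma_j(\tau)$) yields the asymptotic representation
\[
-n^{-1/2}\hat U_{\gamma_j}=A_n-\Sigma_j^T(\tau)\Sigma^{-1}(\beta)B_n+o_P(1),
\]
where $A_n=n^{-1/2}\sum_i\int F(u)(z_j^{(i)}-e_j(u,\beta))dM_i(u)$ and $B_n=n^{-1/2}\sum_i\int(z^{(i)}-e(u,\beta))dM_i(u)$. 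This is a single sum of martingale integrals, to which Rebolledo's CLT applies. Matching its predictable variation against $\hat D_j$---via the convergences (\ref{eq5}) and conversion of $dN$-integrals into $s^{(0)}d\Lambda$-integrals through the compensator---then gives convergence to $N(0,1)$ by Slutsky.

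The main technical obstacle is that $\hat F$ is not predictable: $\hat\Lambda$ is built from $\hat\beta$, which depends on the entire data. Substituting $\hat F$ by the deterministic $F$ in the leading martingale-type sum relies on the uniform rate $\sup_{t\le\tau}|\hat F(t)-F(t)|=O_P(n^{-1/2})$, an integration-by-parts argument against the genuine martingale $Q_n(t)=\sum_i\int_0^t(z_j^{(i)}-E_j(u,\beta))dM_i(u)$ of size $O_P(n^{1/2})$, and the asymptotic linearization of $\hat\Lambda-\Lambda$ as a Breslow-type martingale integral; verifying that the resulting perturbation is $o_P(1)$ rather than contributing at $O_P(1)$ is the crux of the calculation.
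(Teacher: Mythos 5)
Your proposal is correct and follows essentially the same route as the paper's proof: Doob--Meyer decomposition of $N_i$ under the null, the centering identity $S^{(1)}_j-E_jS^{(0)}=0$, linearization in $\hat\beta$ combined with the Andersen--Gill representation of $n^{1/2}(\hat\beta-\beta)$, Rebolledo's martingale CLT, and Slutsky with the variance estimate $\hat D_j$; expanding $E_j(\cdot,\hat\beta)$ around $\beta$ rather than $e^{\hat\beta^Tz^{(i)}}-e^{\beta^Tz^{(i)}}$ is only a bookkeeping difference. If anything, you are more explicit than the paper about the non-predictability of $\hat F$ (the paper simply inserts $\hat F$ and $E(\cdot,\hat\beta)$ into its predictable-variation computations), so your flagging of that substitution as the technical crux is apt rather than a defect.
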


\begin{proof}
Under the Cox model,\index{Cox model}
\[
N_i(t)=\int_0^tY_i(u)e^{\beta^Tz^{(i)}}d\Lambda(u)+M_i(t),
\]
where $M_i$ are
martingales with respect to the history generated by the data. So using
it and taking into account that
\[
\sum_{i=1}^nz_j^{(i)}Y_i(u)e^{\hat\beta^Tz^{(i)}}-E_{j}(u,\hat\beta)\sum_{i=1}^n Y_i(u)e^{\hat\beta^Tz^{(i)}}=0,
\]
the statistic $\hat U_{\gamma_j}$
can be written in the form
\begin{align*}
\hat U_{\gamma_j}&{}=  \sum_{i=1}^n
\int_0^{t}\hat F(u) \bigl\{ z_j^{(i)}-E_{j}(u,
\hat\beta)\bigr\} Y_i(u) \bigl(e^{\hat\beta^Tz^{(i)}}-e^{\beta
^Tz^{(i)}}
\bigr)d\Lambda(u)\\
 &\quad{} -\sum_{i=1}^n\int
_0^{t} \hat F(u) \bigl\{z_j^{(i)}-E_{j}(u,
\hat\beta )\bigr\} dM_i(u).
\end{align*}
Assumptions~\ref{assumpA}\,a)--d) and consistence of the partial likelihood estimator
$\hat\beta$ imply that
\begin{align*}
n^{-1/2}\hat U_{\gamma_j}(t)&{}=  n^{-1} \sum
_{i=1}^n\int_0^{t}
\hat F(u)\bigl\{z_j^{(i)}
\\
&\quad{}  -E_{j}(u,\hat\beta)\bigr\}\bigl(z^{(i)} \bigr)^T Y_i(u)e^{\hat\beta ^Tz^{(i)}}d\Lambda(u) \,n^{1/2}(\hat\beta-\beta)
\\
&\quad{} - n^{-1/2}\sum_{i=1}^n\int_0^{t} \hat F(u) \bigl\{ z_j^{(i)}-E_{j}(u,\hat\beta)\bigr\}dM_i(u)+o_P(1)
\end{align*}
uniformly on $[0,\tau]$.
Using convergences\index{convergences} (\ref{eq5}) and consistence of the estimator $\hat\beta$
we write the first integral in the form
\begin{align*}
& n^{-1} \sum_{i=1}^n\int_0^{t}\hat F(u)\bigl\{z_j^{(i)} -E_{j}(u,\hat\beta)\bigr\}
    (z^{(i)})^T Y_i(u)e^{\hat\beta^Tz^{(i)}}d\hat\Lambda(u)+o_P(1)\\
& \quad{}= n^{-1} \sum_{i=1}^n\int_0^{t}\hat F(u)\bigl\{z_j^{(i)} -E_{j}(t,\hat\beta)\bigr\}
    (z^{(i)})^T Y_i(u)e^{\hat\beta^Tz^{(i)}}\frac{dN(u)}{S^{(0)}(u,\hat\beta )}+o_P(1) \\
& \quad{}= \hat\Sigma^T_{j}(\tau)+o_P(1).
\end{align*}
Under Assumptions~\ref{assumpA} (now Assumption~\ref{assumpA}\,e) is crucial) the
following
%development
expression holds:
\[
n^{1/2}(\hat\beta-\beta)=\hat\Sigma^{-1}(\tau)n^{-1/2}\sum^n_{i=1}\int^{\tau}_0\{z^{(i)}-{E}(u,\hat\beta)\}
dM_i(u)+o_p(1)
\]
uniformly on $[0,\tau]$ (see Andersen \xch{{et\,al.}}{{et al}}~\cite{Andersen1993},
Theorem VII.2.2, where even weaker assumptions instead of a)--d) are
used). It implies
\begin{align*}
n^{-1/2}\hat U_{\gamma_j}(t)&{}= \hat\Sigma^T_{j}(t)
\hat\Sigma ^{-1}(\tau)\, n^{-1/2}\sum^n_{i=1}\int^{\tau}_0
\bigl\{z^{(i)}-{E}(u,\hat\beta )\bigr\}dM_i(u)\\
&\quad{} -n^{-1/2}\sum^n_{i=1}\int^{t}_0\hat F(u) \bigl\{z_j^{(i)}-E_{j}(t,\hat \beta)\bigr\}dM_i(u)+o_p(1)\\
&{}=: M^*(t)+o_p(1)
\end{align*}
uniformly on $[0,\tau]$.

The predictable variation\index{predictable variation} of the local martingale $M^*(t)$ is
\begin{align*}
& \bigl\langle M^* \bigr\rangle(t)\\
&\quad{} =\hat\Sigma^T_{j}(t)\hat\Sigma^{-1}(\tau)\, n^{-1}\sum^n_{i=1}\int^{t}_0\bigl\{z^{(i)} -{E}(u,\hat\beta)
\bigr\}^{\otimes2}Y_i(u)e^{\beta^Tz^{(i)}} d\Lambda(u)\hat
\Sigma^{-1}(\tau)\hat\Sigma_{j}(t)\\
&\qquad{} - \hat\Sigma^T_{j}(t)\hat\Sigma^{-1}(\tau)
\, n^{-1}\sum^n_{i=1}\int
^{t}_0\hat F(u) \bigl\{z_j^{(i)}
-E_{j}(u,\hat\beta)\bigr\}  \bigl\{z^{(i)}
 -{E}(u,\hat\beta)\bigr\}\\
&\qquad
\times{}Y_i(u)e^{\beta^Tz^{(i)}}d\Lambda(u)
 \\
&\qquad{}
-n^{-1}\sum^n_{i=1}\int
^{t}_0\hat F(u) \bigl\{z_j^{(i)}-E_{j}(u,
\hat\beta)\bigr\} \bigl\{z^{(i)}
-{E}(u,\hat\beta)\bigr\}^T
\\
&\qquad
\times{}Y_i(u)e^{\beta^Tz^{(i)}} d
\Lambda(u) \hat \Sigma^{-1}(\tau)\hat\Sigma_{j}(t)
\\
&\qquad{} +n^{-1}\sum^n_{i=1}\int
^{t}_0\hat F^2(u) \bigl
\{z_j^{(i)}-E_{j}(u,\hat \beta)\bigr
\}^2 Y_i(u)e^{\beta^Tz^{(i)}}d\Lambda(u).
\end{align*}
Note that
\begin{align*}
&{}n^{-1}\sum^n_{i=1}\int^{t}_0\bigl\{z^{(i)}-{E}(u,\hat\beta)\bigr\}^{\otimes
2}Y_i(u)e^{\beta^Tz^{(i)}}d\Lambda(u)=\hat\Sigma(t)+o_P(1),\\
&{} n^{-1}\sum^n_{i=1}\int^{t}_0\hat F(u) \bigl\{z_j^{(i)}-E_{j}(u,
\hat\beta)\bigr\} \bigl\{z^{(i)}-{E}(u,\hat\beta)\bigr\} %\}
Y_i(u)e^{\beta^Tz^{(i)}}d\Lambda(u)\\
&\quad{} =\hat\Sigma_{j}(t)+o_P(1),\\
&{}n^{-1}\sum^n_{i=1}\int^{t}_0\hat F^2(u)
\bigl\{z_j^{(i)}-E_{j}(u,\hat\beta)\bigr\}^2 Y_i(u)e^{\beta^Tz^{(i)}}d\Lambda
(u)=\hat\Sigma_{jj}(t)+o_P(1).
\end{align*}
So the predictable variation\index{predictable variation}
%is
%
\[
\langle M^* \rangle(t)=\hat\Sigma_{jj}(t)-\hat\Sigma^T_{j}(t)\hat\Sigma
^{-1}(\tau)\hat\Sigma_{j}(t)+o_P(1)\stackrel{P}\rightarrow\Sigma
_{jj}(t)-\Sigma^T_{j}(t)\Sigma^{-1}(\tau)\Sigma_{j}(t)
\]
uniformly on $[0,\tau]$; here $\Sigma_{jj}(t)$, $\Sigma_{j}(t)$, $\Sigma
(\tau)$ are the limits in probability of the random matrices $\hat\Sigma
_{jj}(t)$, $\hat\Sigma_{j}(t)$, $\hat\Sigma(\tau)$.

Under Assumptions~\ref{assumpA} for any $\varepsilon>0$ the predictable variation\index{predictable variation}
(see Andersen \xch{et\,al.}{et al}~\cite{Andersen1993}), Theorem VII.2.2, where even
weaker assumptions are used)
\[
\Biggl\langle n^{-1/2}\sum^n_{i=1}\int^{t}_0
\bigl\{z_j^{(i)}-E_{j}(t,\beta)\bigr\}\mathbf{1}_{\{\mid{z}^{(i)}_{j}
-{E}_{j}(u,\beta)\mid> \sqrt{n}\varepsilon\}}dM_i(u) \Biggr\rangle\stackrel
{P}\rightarrow0
\]
and $\beta$ can be replaced by $\hat\beta$ in the expression of the left
side because $\hat\beta$ is consistent.
Similarly
\[
\Biggl\langle n^{-1/2}\sum^n_{i=1}\int^{t}_0\hat F(u)
\bigl\{z_j^{(i)}-E_{j}(t,\hat\beta)\bigr\}\mathbf{1}_{\{\hat F(u)\mid{z}^{(i)}_{j}
-{E}_{j}(u,\hat\beta)\mid> \sqrt{n}\varepsilon\}}dM_i(u) \Biggr\rangle
\stackrel{P}\rightarrow0.
\]
Hence, the Lindeberg
conditions of the central
limit theorem for martingales (see Andersen \xch{et\,al.}{et al}~\cite{Andersen1993}) are
verified for $M^*$. We saw that the predictable variations\index{predictable variation} $ \langle
M^* \rangle(t)$ converge in probability to non-random non-degenerated matrices.
So the stochastic process $n^{-1/2}\hat U_{\gamma_j}(\cdot)$
converges in distribution to a zero mean Gaussian process on $[0,\tau
]$, in
particular
\[
T=n^{-1/2}\hat U_{\gamma_j}/\sqrt{\hat
D}\stackrel{\cal{D}}\rightarrow N(0,1)\quad\mbox{ as $n\to\infty$},
\]
here
\[
\hat D=\hat\Sigma_{jj}(\tau)-\hat\Sigma^T_{j}(\tau)\hat\Sigma^{-1}(\tau
)\hat\Sigma_{j}(\tau).\qedhere
\]
\end{proof}

{ The test:} the null hypothesis $H_0: \gamma_j=0$ is rejected with
approximate significance
level $\alpha$ if $|T|>z_{\alpha/2}$; here $z_{\alpha/2}$ is the $\alpha/2$
critical value of the standard normal distribution.

\section{Power under approaching alternatives\index{approaching alternatives}}

Let us consider the alternative model (\ref{model}) and suppose that alternatives
are approaching: $\gamma_j=\frac{c}{\sqrt{n}}$, where $c\neq0$ is a
fixed constant.

So the model
\begin{equation}
\label{appr} \lambda(t|z)=\frac{\exp\{\beta^Tz+\Lambda(t)\exp\{\frac{c}{\sqrt{n}}
z_j\}\}}{1+\exp\{\frac{c}{\sqrt{n}} z_j\}[\exp\{\Lambda(t)\exp\{\frac
{c}{\sqrt{n}} z_j\}\}-1]}\, \lambda(t), \quad\Lambda(t)=\int
_0^t\lambda(u)du,
\end{equation}
is considered. Denote by $\beta_0$ the true value of $\beta$ under the
model (\ref{appr}). The counting process $N_i(t)$ has the form
\[
N_i(t)=\int_0^t g\bigl\{z^{(i)}, \Lambda(u),\beta_0,c/\sqrt{n}\bigr\}d\Lambda
(u)+M_i(t),
\]
where $M_i$ are local
martingales with respect to the history generated by the data. Note
that
\[
g\bigl\{z^{(i)}, \Lambda(u),\beta_0,c/\sqrt{n}\bigr\}=
e^{\beta^Tz^{(i)}}\biggl(1-z_j^{(i)}F(u)\frac{c}{\sqrt{n}}+o_P\biggl(\frac{1}{\sqrt{n}}\biggr)\biggr).
\]

Let us consider the stochastic process
\[
Q_n(t,\beta)=\frac{1}n\bigl[\ell(t,\beta)-\ell(t,{\beta}_0)\bigr],
\]
where
\[
\ell(t,\beta)=
\sum_{i=1}^n\int_0^t\bigl\{\beta^Tz^{(i)}- \ln S^{(0)}(u,\beta)\bigr\}\,dN_i(u).
\]

Note that the derivative with respect to $\beta$ is
\[
\dot Q_n(t,\beta)
=\frac{1}n \dot\ell(t,\beta)=\frac{1}n
\sum_{i=1}^n\int_0^t\bigl\{z^{(i)}-E(u,\beta)\bigr\}dN_i(u).
\]
Constructing the test statistic we defined $\hat\beta$ as a random
vector satisfying the estimating equation
$\dot\ell(\tau,\beta)=0$. Let us show that $\hat\beta$ is a consistent
estimator of $\beta_0$.

\begin{thm}\label{thm2} If Assumptions \ref{assumpA} are satisfied then
the probability that the equation $\dot\ell(\beta)=0$ has a unique
solution, converges to $1$ and $\hat{\beta}
\stackrel{P}\rightarrow{\beta}_0$.
\end{thm}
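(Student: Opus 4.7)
My plan is to adapt the standard argmax--consistency proof for the Cox partial likelihood estimator (Andersen et\,al.~\cite{Andersen1993}, Theorem VII.2.1) to the present setting, where the data are generated by the approaching alternative (\ref{appr}) rather than by the PH model itself. The key new ingredient is the Doob--Meyer decomposition $dN_i(u) = Y_i(u)\,g\{z^{(i)},\Lambda(u),\beta_0,c/\sqrt{n}\}\,d\Lambda(u) + dM_i(u)$ combined with the first-order expansion of $g$ already displayed before the theorem; this tells us the true drift differs from the PH drift $Y_i(u)e^{\beta_0^T z^{(i)}}\,d\Lambda(u)$ only by a multiplicative factor $1 + O_P(n^{-1/2})$, so the $1/\sqrt{n}$-perturbation of the model is asymptotically negligible at the scale of consistency.

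The bulk of the work is to show that $Q_n(\tau,\beta)$ converges to a deterministic concave function $Q(\tau,\beta)$ whose unique maximizer is $\beta_0$. Concavity of $Q_n(\tau,\cdot)$ is immediate since $\ddot Q_n(\tau,\beta) = -n^{-1}\int_0^\tau V(u,\beta)\,dN(u)$ is negative semidefinite. Substituting the decomposition above into $Q_n(\tau,\beta)$, I would use (\ref{eq5}) to replace $\log[S^{(0)}(u,\beta)/S^{(0)}(u,\beta_0)]$ by $\log[s^{(0)}(u,\beta)/s^{(0)}(u,\beta_0)]$; the martingale contribution is $o_P(1)$ because its predictable variation is of order $1/n$ under Assumptions \ref{assumpA}\,a)--d); and the $O_P(n^{-1/2})$ term in the expansion of $g$ is swamped after one more application of (\ref{eq5}). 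The resulting pointwise limit is
\[
Q(\tau,\beta) = \int_0^\tau\{(\beta-\beta_0)^T s^{(1)}(u,\beta_0) - s^{(0)}(u,\beta_0)\log[s^{(0)}(u,\beta)/s^{(0)}(u,\beta_0)]\}\,d\Lambda(u),
\]
for which direct differentiation gives $\dot Q(\tau,\beta_0) = 0$ and $\ddot Q(\tau,\beta_0) = -\Sigma(\beta_0)$, negative definite by Assumption~\ref{assumpA}\,e). Concavity together with strict concavity at $\beta_0$ forces $\beta_0$ to be the unique maximizer of $Q(\tau,\cdot)$.

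Pointwise convergence of concave functions to a function with a unique argmax promotes itself (by the standard convex-function convergence lemma) to uniform convergence on compacta and yields $\hat\beta \stackrel{P}{\rightarrow} \beta_0$. For the uniqueness claim, observe that by (\ref{eq5}) the normalized Hessian $n^{-1}\ddot\ell(\tau,\beta) \stackrel{P}{\rightarrow} -\Sigma(\beta)$ uniformly on a neighborhood of $\beta_0$; hence on a small ball around $\beta_0$ the Hessian is uniformly negative definite with probability tending to $1$, so $\ell$ is strictly concave there and the globally concave $\ell$ has at most one critical point in that ball (and consequently globally, since consistency confines any maximizer to the ball w.h.p.). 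The main obstacle is the mismatch between the PH partial-likelihood construction of $\hat\beta$ and the true generating model (\ref{appr}); but the $n^{-1/2}$-expansion of $g$ provided in the excerpt reduces this obstacle to routine bookkeeping, since the perturbation vanishes in the limit defining $Q(\tau,\cdot)$.
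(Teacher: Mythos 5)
Your proposal follows essentially the same route as the paper: Doob--Meyer decomposition of $N_i$ with the $n^{-1/2}$-expansion of $g$ under the approaching alternative, negligibility of the martingale part via its $O(1/n)$ predictable variation, pointwise convergence of the concave $Q_n(\tau,\cdot)$ to the limit $k(\beta)$ with $\dot k(\beta_0)=0$ and $\ddot k(\beta_0)=-\Sigma(\beta_0)$ negative definite, and then the Andersen--Gill concave-convergence lemma to obtain uniqueness of the root and consistency. Your explicit local-Hessian argument for uniqueness is only a minor variant of what the cited Andersen--Gill theorem already delivers, so the proof is correct and matches the paper's.
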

\begin{proof}
Fix $\beta\in{\bf R}^m$.
Using the Doob--Meier decomposition of $N_i(t)$ write the stochastic
process $Q_n$ in the form
\begin{align*}
Q_n(t,\beta)&{}=\frac{1}n\sum_{i=1}^n\int_0^t\biggl[(\beta-{\beta}_0)^Tz^{(i)}-
\ln\frac{S^{(0)}(u,\beta)}{S^{(0)}(u,{\beta}_0)}\biggr]\, dN_i(u)
\\
&{}=\frac{1}n\sum_{i=1}^n\int_0^t\biggl[(\beta-{\beta}_0)^Tz^{(i)}-
\ln\frac{S^{(0)}(u,\beta)}{S^{(0)}(u,{\beta}_0)}\biggr]
\\
&\quad{}\times Y_i(u) e^{\beta_0^Tz^{(i)}}\biggl(1-\frac{c}{\sqrt
{n}}F(u)z_j^{(i)}+o_P\biggl(\frac{1}{\sqrt{n}}\biggr)\biggr)d\Lambda(u)
\\
&\quad{}+\frac{1}n\sum_{i=1}^n\int_0^t\biggl[(\beta-{\beta}_0)^Tz^{(i)}-
\ln\frac{S^{(0)}(u,\beta)}{S^{(0)}(u,{\beta}_0)}\biggr]\, dM_i(u)
\\
&{}
=\int_0^t\biggl[(\beta-{\beta}_0)^TS^{(1)}(u,{\beta}_0)- \ln
\frac{S^{(0)}(u,\beta)}{S^{(0)}(u,{\beta}_0)}S^{(0)}(u,{\beta}_0)\biggr]\,
d\Lambda(u)
\\
&\quad{}
+o_P(1)+ \frac{1}n\sum_{i=1}^n\int_0^t\biggl[(\beta-{\beta}_0)^Tz^{(i)}- \ln
\frac{S^{(0)}(u,\beta)}{S^{(0)}(u,{\beta}_0)}\biggr]\,
dM_i(u)\\
&{}=K_n(t,\beta)+\tilde M_n(t)+o_P(1).
\end{align*}
The predictable variation\index{predictable variation} of
the martingale $\tilde M_n$ is
\begin{align*}
\langle\tilde M_n
\rangle(t) &{}=\frac{1}{n^2}\sum_{i=1}^n\int_0^t\biggl\{(\beta-{\beta}_0)^Tz^{(i)}-
\ln
\frac{S^{(0)}(u,\beta)}{S^{(0)}(u,{\beta}_0)}\biggr\}^2\,Y_i(u)e^{{\beta
}_0^Tz^{(i)}}
\\
&\quad{}\times\biggl(1-\frac{c}{\sqrt{n}}F(u)z_j^{(i)}+o_P\biggl(\frac{1}{\sqrt
{n}}\biggr)\biggr)d\Lambda(u)\\
&{}=
\frac{1}{n^2}\int_0^t\biggl\{(\beta-{\beta}_0)^T
 S^{(2)}(u,{\beta}_0)(\beta-{\beta}_0)
\\
&\quad{}
-2(\beta-{\beta
}_0)^T
S^{(1)}(u,{\beta}_0)
\ln\frac{S^{(0)}(u,\beta)}{S^{(0)}(u,{\beta}_0)}
\\
&\quad{}
+
S^{(0)}(u,{\beta}_0)\ln^2
\frac{S^{(0)}(u,\beta)}{S^{(0)}(u,{\beta}_0)} \biggr\}d\Lambda(u)+o_p\biggl(\frac{1}n\biggr).
\end{align*}
So $\langle\tilde M_n
\rangle(\tau)\stackrel{P}\rightarrow0$.
$K_n(\tau,\beta)\stackrel{P}\rightarrow k(\beta)$;
here
\[
k(\beta)=\int_0^\tau\biggl[(\beta-{\beta}_0)^Ts^{(1)}(u,{\beta}_0)-
\ln
\frac{s^{(0)}(u,\beta)}{s^{(0)}(u,{\beta}_0)}s^{(0)}(u,{\beta
}_0)\biggr]d\Lambda(u),
\]
which implies
\[
Q_n(\beta)=Q_n(\tau,\beta)\stackrel{P}\rightarrow
k(\beta).
\]
Note that
\begin{gather*}
\dot k(\beta)=\int_0^\tau\bigl[s^{(1)}(u,{\beta}_0)-
e(u,\beta)s^{(0)}(u,{\beta}_0)\bigr]\,d\Lambda(u),\quad\dot
k({\beta}_0)=0.
\\
\ddot k(\beta)=-\int_0^\tau
v(u,\beta)s^{(0)}(u,{\beta}_0)\,d\Lambda(u),\quad\ddot k(\beta
_0)=-\Sigma(\beta_0).
\end{gather*}
So the matrix $\ddot k(\beta_0)$ is negatively definite. The remaining
part of the proof coincides with the proof of analogous theorem for the
Cox model\index{Cox model} (see Theorem VII.2.1 of Andersen \xch{et\,al.}{et al}), i.e.
Andersen's and Gill's theorem~\cite{AndGill1982} is applied. This
theorem says that if the
sequence of concave differentiable
stochastic processes $Q_n(\beta)$ pointwise converges
in probability to a real function $k(\beta)$ on a convex open set
$E\subset{\bf R}^m$, then: a) the function $k(\beta)$ is concave on
$E$; b) the convergence is uniform in probability on compact
subsets of the set $E$; c) if the function $k(\beta)$ has a unique
maximum at the
point ${\beta}_0$ then the probability that the equation $\dot
Q(\beta)=0$ has a unique root $\hat{\beta}$
in the set $E$ tends to $1$ and $\hat{\beta}\stackrel{P}\rightarrow
{\beta}_0$.

Assumption~\ref{assumpA}\,e) was crucial for application of Andersen's and Gill's theorem.
\end{proof}

Note that $\hat\Lambda$ is uniformly consistent estimator of $\Lambda$
on $[0,\tau]$:
\begin{align*}
\hat\Lambda(t)&{} =\int_0^t\frac{dN(u)}{S^{(0)}(u,\hat\beta)}=\Lambda(t)+
\int_0^t\biggl[\frac{S^{(0)}(u,\beta_0)}{S^{(0)}(u,\hat\beta)}-1\biggr]d\Lambda(u)
\\
&\quad{}-\frac{c}{\sqrt{n}}
\int_0^t\frac{S_j^{(1)}(u,\beta_0)}{S^{(0)}(u,\hat\beta)}d\Lambda
(u)+\int_0^t\frac{dM(u)}{S^{(0)}(u,\hat\beta)}+o_p(1)=\Lambda(t)+o_p(1)
\end{align*}
uniformly on $[0,\tau]$ because $\hat\beta$ is consistent and
\[
\biggl\langle\int_0^\tau\frac{dM(u)}{S^{(0)}(u,\hat\beta)}\biggr\rangle=\int_0^\tau
\frac{S^{(0)}(u,\beta_0)}{(S^{(0)}(u,\hat\beta))^2}d\Lambda(u)=o_p(1).
\]
Set
\[
\mu_j(t)=-
{c}\int_0^tv_j(u,\beta_0)s^{(0)}(u,\beta_0)F(u)d\Lambda(u).
\]

\begin{thm}\label{thm3} If Assumptions \ref{assumpA} are satisfied then
\begin{enumerate}
\item[1)]
\[
\frac{1}{\sqrt{n}}\dot\ell(\cdot,{\beta}_0)\stackrel{\cal
D}{\rightarrow} \mu_j(t)+Z(\cdot,{\beta}_0)\; \mbox{on}\;(D[0,\tau])^m,
\]
where $Z$
is an $m$-dimensional Gaussian process with components having independent
increments,
$Z_j(0)=0$ a.s. and for all $0 \leq s \leq t \leq\tau$:
\[
{\bf cov}(Z_j(s),Z_{j'}(t))=\sigma_{jj'}(s);
\]
here $\sigma_{jj'(t)}$ are the elements of the matrix $\Sigma(t)$.
In particular,
\[
\frac{1}{\sqrt{n}}\dot\ell({\beta}_0)\stackrel{\cal D}{\rightarrow}
\mu_j(\tau)+Z(\tau,{\beta}_0)\sim N(\mu_j(\tau),\Sigma(\tau)),\quad
\mbox{as}\ n\to\infty.
\]
\item[2)] $\hat\Sigma(\tau)= -\frac{1}n \ddot\ell(\hat{\beta})\stackrel
{P}\rightarrow\Sigma(\tau)$,
\item[3)] $\sqrt{n}(\hat{\beta}-{\beta}_0)=(-\frac{1}n\ddot\ell({\hat\beta
}))^{-1}\frac{1}{\sqrt{n}}\dot\ell({\beta}_0)+o_P(1)$.
\end{enumerate}
\end{thm}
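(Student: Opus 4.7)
The proof naturally splits into three parts that share a common engine: the Doob--Meier decomposition of $N_i$ under the approaching alternative (\ref{appr}), combined with the expansion of $g\{z^{(i)},\Lambda(u),\beta_0,c/\sqrt{n}\}$ already recorded in the excerpt and the uniform convergences (\ref{eq5}). I would prove the three parts in order, recycling the same basic calculations.

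For part 1), I would substitute the Doob--Meier decomposition into $\dot\ell(t,\beta_0)=\sum_i\int_0^t\{z^{(i)}-E(u,\beta_0)\}dN_i(u)$ and split the result into a compensator (drift) piece and a martingale (noise) piece. Plugging the Taylor expansion of $g$ into the drift, I expect the leading $O(1)$ term
\[
\tfrac{1}{\sqrt{n}}\sum_i\int_0^t\{z^{(i)}-E(u,\beta_0)\}Y_i(u)e^{\beta_0^T z^{(i)}}d\Lambda(u)
\]
to vanish identically because $S^{(1)}(u,\beta_0)=E(u,\beta_0)S^{(0)}(u,\beta_0)$. The $O(1/\sqrt{n})$ correction, after absorbing the $\sqrt{n}$, gives
\[
-\tfrac{c}{n}\sum_i\int_0^t\{z^{(i)}-E(u,\beta_0)\}z_j^{(i)}F(u)Y_i(u)e^{\beta_0^T z^{(i)}}d\Lambda(u),
\]
which by (\ref{eq5}) and Assumption~\ref{assumpA}\,c) converges uniformly in probability to $\mu_j(t)$. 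For the noise part, $\tfrac{1}{\sqrt{n}}\sum_i\int_0^t\{z^{(i)}-E(u,\beta_0)\}dM_i(u)$ is a local square-integrable martingale whose predictable variation equals $\hat\Sigma(t)+o_P(1)$ and converges to $\Sigma(t)$; the Lindeberg condition follows from boundedness of the covariates, so Rebolledo's martingale CLT delivers the Gaussian limit process $Z(\cdot,\beta_0)$ with the stated covariance, and summing drift plus noise produces the weak convergence on $(D[0,\tau])^m$.

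For part 2), I would use the identity $-\tfrac{1}{n}\ddot\ell(\hat\beta)=\tfrac{1}{n}\int_0^\tau V(u,\hat\beta)dN(u)=\hat\Sigma(\tau)$. Replacing $dN$ by its compensator (the martingale contribution is $o_P(1)$ by a predictable-variation bound) and using consistency of $\hat\beta$ from Theorem~\ref{thm2} together with (\ref{eq5}) yields $\hat\Sigma(\tau)\stackrel{P}{\rightarrow}\int_0^\tau v(u,\beta_0)s^{(0)}(u,\beta_0)d\Lambda(u)=\Sigma(\tau)$. For part 3), I would Taylor-expand $\dot\ell$ around $\beta_0$ at $\hat\beta$, obtaining $0=\dot\ell(\hat\beta)=\dot\ell(\beta_0)+\ddot\ell(\beta^\ast)(\hat\beta-\beta_0)$ for some $\beta^\ast$ between the two. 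Replacing $\beta^\ast$ by $\hat\beta$ at cost $o_P(1)$ via consistency and the uniform convergences, then inverting using Part 2 and the positive-definiteness in Assumption~\ref{assumpA}\,e), gives the stated expansion.

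The hard part will be the drift computation in Part 1: the $O(1)$ term of the expansion of $g$ must produce \emph{exactly} the score identity so that it vanishes algebraically, leaving the $O(1/\sqrt{n})$ correction to generate the non-trivial drift $\mu_j(t)$. Controlling the $o_P(1/\sqrt{n})$ remainder in $g$ uniformly in $u\in[0,\tau]$, once it has been multiplied by $\sqrt{n}$ and summed over $n$ terms, is the delicate technical step; Assumptions~\ref{assumpA}\,c) and d) combined with (\ref{eq5}) are the tools I would use to keep this remainder $o_P(1)$ uniformly.
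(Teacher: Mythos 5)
Your proposal follows essentially the same route as the paper's proof: the Doob--Meier decomposition combined with the expansion of $g\{z^{(i)},\Lambda(u),\beta_0,c/\sqrt{n}\}$, cancellation of the leading drift via $S^{(1)}(u,\beta_0)=E(u,\beta_0)S^{(0)}(u,\beta_0)$ so that the $O(1/\sqrt{n})$ correction yields $\mu_j(t)$, Rebolledo's martingale CLT with the predictable variation converging to $\Sigma(t)$ for part 1), the identity $-\frac{1}{n}\ddot\ell(\hat\beta)=\hat\Sigma(\tau)$ plus consistency and the uniform convergences (\ref{eq5}) for part 2) (the paper additionally isolates the extra $O(n^{-1/2})$ drift term from the alternative, which your compensator replacement absorbs implicitly), and the mean value theorem with $\dot\ell(\hat\beta)=0$ for part 3). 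The proposal is correct and matches the paper's argument.
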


\begin{proof}
Set
\[
M^*(t)=\frac{1}{\sqrt{n}}
\sum_{i=1}^n\int_0^t\bigl\{z^{(i)}-E(u,{\beta}_0)\bigr\}dM_i(u).
\]
Using the Doob--Meier decomposition we have
\begin{align*}
\frac{1}{\sqrt{n}}\dot\ell(t,{\beta}_0)&{}=
\frac{1}{\sqrt{n}}\sum_{i=1}^n\int_0^t\bigl\{z^{(i)}-E(u,{\beta}_0)\bigr\}dN_i(u)\\
&{}=M^*(t)+
\frac{1}{\sqrt{n}}\sum_{i=1}^n\int_0^t\bigl\{z^{(i)}
-
E(u,\beta_0)\bigr\} Y_i(u) e^{\beta_0^Tz^{(i)}}\\
&\quad{}\times\biggl(1-\frac{c}{\sqrt{n}}F(u)z_j^{(i)}+o_P\biggl(\frac{1}{\sqrt{n}}\biggr)\biggr)d\Lambda(u)
\\
&{}=
M^*(t)-
\frac{c}n\sum_{i=1}^n\int_0^tV_j(u,\beta_0)S^{(0)}(u,\beta
_0)F(u)d\Lambda(u)+o_P(1)
\\
&{}=
M^*(t)-
{c}\int_0^tv_j(u,\beta_0)s^{(0)}(u,\beta_0)F(u)d\Lambda
(u)+o_P(1)\\
&{}=M^*(t)+\mu_j(t)+o_P(1),
\end{align*}
where $V_j$ is the $j$th column of the matrix $V$.

The first term converges weakly to $Z(\cdot,{\beta}_0)\; \mbox{on}\;
(D[0,\tau])^m$ because the limit in probability of
its predictable covariation matrix has the same expression as that of
analogous term under the Cox model:\index{Cox model}
\begin{align*}
\langle M^*\rangle(t)&{}=
\frac{1}{{n}}\int_0^t
V(u,{\beta}_0)S^{(0)}(u,{\beta}_0)\,d\Lambda(u)-\frac{c}{n^{3/2}}
\sum^n_{i=1}
\int_0^t\bigl\{z^{(i)}-{E}(u,{\beta}_0)\bigr\}\\
&\quad{}\times\bigl\{z^{(i)}-{E}(u,{\beta}_0)\bigr\}^TY_i(u)\,e^{{\beta}_0^T
z^{(i)}}\bigl(F(u)z_j^{(i)}+o_P(1)\bigr)d\Lambda(u)
\\
&{}=\frac{1}{{n}}\int_0^t
V(u,{\beta}_0)S^{(0)}(u,{\beta}_0)\,d\Lambda(u)+o_P(1)\stackrel{
P}{\rightarrow} \Sigma(t).
\end{align*}
Analogously, verification of the Lindeberg's condition
is done by the same way as in the case of the Cox model.\index{Cox model}

Let us consider the norm of the difference:
\begin{align*}
\biggl\Vert-\frac{1}n \ddot\ell(\hat{\beta})-\Sigma(\tau)
\biggr\Vert
&{}\leq
\sup_{u\in[0,\tau]}\bigl|V(u,\hat{\beta})-v(u,\hat{\beta})\bigr|+\sup_{u\in[0,\tau]}\bigl|v(u,\hat{\beta})-v(u,{\beta}_0)\bigr|
\\
&\quad{}+
\biggl\Vert\frac{1}n\int_0^\tau v(u,{\beta}_0)dM(u)
\biggr\Vert\\
&\quad{}+
\biggl\Vert\int_0^\tau v(u,{\beta}_0)\biggl(\frac{1}n S^{(0)}(u,{\beta}_0)-s^{(0)}(u,{\beta}_0)\biggr)\,d\Lambda(u)\biggr\Vert
\\
&\quad{}
+
\biggl\Vert\frac{c}{n\sqrt{n}}\int_0^\tau F(u)\sum_{i=1}^nz_j^{(i)}e^{\beta_0^Tz^{(i)}}(1+o_p(1))d\Lambda(u)
\biggr\Vert.
\end{align*}
Using the fact that the estimator $\hat\beta$ is consistent and that
the first four terms have the same structure as analogous terms in the proof
of Theorem VII.2.2 of Andersen \xch{et\,al.}{et all}, we have that the first four
terms converge in probability to zero. Such convergence of the last
term is obvious
because the last term multiplied by $\sqrt{n}$ converges to a finite
limit in probability.

The mean value theorem and consistency of the estimator $\hat\beta$ imply
\[
\dot\ell_j(\hat{\beta})-\dot\ell_j({\beta}_0)=\ddot\ell_j({\beta
}_j^*)(\hat{\beta}-{\beta}_0)=n\biggl(\frac{1}n\ddot\ell_j({\hat\beta
})+o_P(1)\biggr)(\hat{\beta}-{\beta}_0);
\]
here ${\beta}_j^*$ is a point on the line segment joining the points
$\hat{\beta}$ and ${\beta}_{0}$. Since $\dot\ell_j(\hat{\beta})=0$, we
obtain
\[
\sqrt{n}(\hat{\beta}-{\beta}_0)=\biggl(-\frac{1}n\ddot\ell({\hat\beta
})+o_P(1)\biggr)^{-1}\frac{1}{\sqrt{n}}\dot\ell({\beta}_0)=
\biggl(-\frac{1}n\ddot\ell({\hat\beta})\biggr)^{-1}\frac{1}{\sqrt{n}}\dot\ell({\beta}_0)+o_p(1).\qedhere
\]
\end{proof}

Set
\begin{align*}
d&{}=c\biggl[ \int_0^{\tau}F^2(u)v_{jj}(u,\beta_0)s^{(0)}(u,\beta_0)d\Lambda(u)\\
&\quad{}-\int_0^{\tau} F(u\xch{)}{))}v^T_j(u,\beta_0)d\Lambda(u)\Sigma^{-1}(\tau)\int_0^\tau F(u)v_j(u,\beta_0)s^{(0)}(u,\beta_0)d\Lambda(u)\biggr].
\end{align*}
Note that for $m=1$
\[
d/c=\frac{\int F^2vs^{(0)}d\Lambda\int vs^{(0)}d\Lambda-(\int
Fvs^{(0)}\xch{d}{)d}\Lambda)^2}{
\int_vs^{(0)}d\Lambda}>0,
\]
because $F$ is not equal to 1 a.s. on $[0,\tau]$.
\begin{thm}\label{thm4} Under Assumptions \ref{assumpA}
\[
T\stackrel{d}\rightarrow\mu+N(0,1)=N(\mu,1),
\]
where $\mu=d/ D_j$ and $D_j$ is
the limit in probability of the random variable $\hat D_j$.
\end{thm}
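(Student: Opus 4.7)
The plan is to mirror the proof of Theorem~\ref{thm1}, decomposing $\hat U_{\gamma_j}$ under the local alternative~(\ref{appr}) instead of under $H_0$. Two new ingredients enter: a nonrandom $O(1)$ drift in $n^{-1/2}\hat U_{\gamma_j}$ from the $c/\sqrt n$ piece of the compensator of $N_i$, and the local-alternative drift in $\sqrt n(\hat\beta-\beta_0)$ supplied by Theorem~\ref{thm3}. The martingale part of the decomposition, and hence the limiting variance, behaves exactly as in Theorem~\ref{thm1} up to $o_P(1)$ corrections; the consistency of $\hat\beta$ needed for this is the content of Theorem~\ref{thm2}.

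Concretely, I would plug the Doob--Meier decomposition $dN_i=Y_i g\{z^{(i)},\Lambda(u),\beta_0,c/\sqrt n\}d\Lambda+dM_i$ into the definition~(\ref{u_gamma}) of $\hat U_{\gamma_j}$, use the expansion $g=e^{\beta_0^Tz^{(i)}}(1-z_j^{(i)}F(u)c/\sqrt n+o_P(1/\sqrt n))$ stated before Theorem~\ref{thm3}, exploit the identity $S^{(1)}_j(u,\hat\beta)=E_j(u,\hat\beta)S^{(0)}(u,\hat\beta)$, and perform a first-order Taylor expansion of $e^{\beta_0^Tz^{(i)}}$ around $\hat\beta$ as in the proof of Theorem~\ref{thm1}. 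This should yield
\[
n^{-1/2}\hat U_{\gamma_j}=\hat\Sigma_j^T(\tau)\sqrt n(\hat\beta-\beta_0)+A_n+B_n+o_P(1),
\]
where
\[
A_n=\frac{c}{n}\sum_{i=1}^n\int_0^\tau\hat F(u)F(u)\bigl\{z_j^{(i)}-E_j(u,\hat\beta)\bigr\}z_j^{(i)}Y_i(u)e^{\beta_0^Tz^{(i)}}d\Lambda(u)
\]
is the drift produced by the $O(1/\sqrt n)$ part of $g$, and $B_n=-n^{-1/2}\sum_i\int_0^\tau\hat F(u)\{z_j^{(i)}-E_j(u,\hat\beta)\}dM_i(u)$ is the martingale already handled in Theorem~\ref{thm1}. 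Using~(\ref{eq5}) and $\hat F\stackrel{P}{\rightarrow}F$, I would then show $A_n\stackrel{P}{\rightarrow}c\int_0^\tau F^2 v_{jj}s^{(0)}d\Lambda$ and $\hat\Sigma_j(\tau)\stackrel{P}{\rightarrow}\Sigma_j(\tau):=\int_0^\tau Fv_j s^{(0)}d\Lambda$, with the extra $1-z_j^{(i)}Fc/\sqrt n$ factor contributing only $o_P(1)$.

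Finally, Theorem~\ref{thm3}(1) gives $\sqrt n(\hat\beta-\beta_0)\stackrel{\cal D}{\rightarrow}\Sigma^{-1}(\tau)[\mu_j(\tau)+Z(\tau,\beta_0)]$, and because $\mu_j(\tau)=-c\,\Sigma_j(\tau)$, the deterministic parts of $\hat\Sigma_j^T(\tau)\sqrt n(\hat\beta-\beta_0)$ and $A_n$ combine precisely to $-c\,\Sigma_j^T(\tau)\Sigma^{-1}(\tau)\Sigma_j(\tau)+c\int_0^\tau F^2 v_{jj}s^{(0)}d\Lambda=d$. The joint weak convergence of $(B_n,n^{-1/2}\dot\ell(\beta_0))$ to a centered Gaussian pair follows from the martingale CLT exactly as in Theorem~\ref{thm1}, since the Lindeberg condition and the predictable variations change only by $O(1/\sqrt n)$; and the random linear combination $\Sigma_j^T(\tau)\Sigma^{-1}(\tau)Z(\tau,\beta_0)+\lim B_n$ has limiting variance $D_j$ by the same cancellation that produced $\hat D_j$ in~(\ref{eq7}). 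Combined with $\hat D_j\stackrel{P}{\rightarrow}D_j$, obtained by applying~(\ref{eq5}) to~(\ref{eq7}), Slutsky's lemma delivers the stated $T\stackrel{d}{\rightarrow}N(\mu,1)$. The main obstacle is the bookkeeping of the various $O(1/\sqrt n)$ corrections under~(\ref{appr}), to confirm that only the announced drift $d$ survives in the mean and that none of them perturbs the null-case limiting variance $D_j$; beyond that the steps are direct adaptations of the null-case computation.
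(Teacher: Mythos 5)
Your proposal follows essentially the same route as the paper's own proof: the Doob--Meier decomposition of $N_i$ under the local alternative (\ref{appr}) combined with the expansion of $g$, the identity $S^{(1)}_j-E_jS^{(0)}=0$ and a Taylor step in $\hat\beta$, Theorem~\ref{thm3} to supply the drift $\mu_j(\tau)=-c\,\Sigma_j(\tau)$ in $\sqrt n(\hat\beta-\beta_0)$, so that the deterministic parts sum to $d$ while the martingale part coincides with the null-case one of Theorem~\ref{thm1} and yields the variance $D_j$; your phrasing via joint convergence of $(B_n,n^{-1/2}\dot\ell(\beta_0))$ is just the paper's substitution of the score representation in different words, and your form of the cross term in $d$ (with $s^{(0)}$ in both integrals) is the correct reading of the paper's formula.
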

\begin{proof} Set
\[
\bar M(t)=n^{-1/2}\sum_{i=1}^n\int_0^{t}\hat F(u)
\bigl\{z_j^{(i)}-E_{j}(u,\hat\beta)\bigr\} dM_i(u).
\]
Similarly as in the proof of Theorem~\ref{thm1} using the equality
$S_j^{(1)}-E_{j}S^{(0)}=0$, consistency of $\hat\beta$, Assumptions~\ref{assumpA},
uniform consistency of $\hat F$ on $[0,\tau]$, we write the following
%development:
expression:
\begin{align*}
n^{-1/2}\hat U_{c/\sqrt{n}}(t)
&{}=-n^{-1/2}\sum_{i=1}^n\int_0^{t}\hat F(u)
\bigl\{z_j^{(i)}-E_{j}(u,\hat\beta)\bigr\} dN_i(u)\\
&{}=-\bar M(t)
-n^{-1/2}
\sum_{i=1}^n\int_0^{t}\hat F(u)\bigl\{z_j^{(i)} -E_{j}(u,\hat\beta)\bigr\}e^{\beta
_0^Tz^{(i)}}\\
&\quad{}\times
\bigl\{
1-z_j^{(i)}
F(u)\frac{c}{\sqrt{n}}(1+o_p(1))\bigr\}
Y_i(u)d\Lambda(u)
\\
&{}
=
-\bar M(t)+ \frac{c}n \int_0^{t}\hat F(u)F(u)V_{jj}(u,\beta
_0)S^{(0)}(u,\beta_0)d\Lambda(u) \\
&\quad{}
+
n^{-1} \int_0^{t}\hat F(u)V_j^T(u,\beta_0)S^{(0)}(u,\beta_0) d\Lambda
(u)\,n^{1/2}(\hat\beta-\beta_0)+o_P(1)
\end{align*}
uniformly on $[0,\tau]$. Applying Theorem~\ref{thm3} write the right side in the form
\begin{align*}
n^{-1/2}\hat U_{c/\sqrt{n}}(t)&{}=c\biggl[ \int_0^{t}F^2(u)v_{jj}(u,\beta
_0)s^{(0)}(u,\beta_0)d\Lambda(u)
\\
&\quad{}
-\int_0^{t} F(u\xch{)}{))}v^T_j(u,\beta_0)d\Lambda(u)\Sigma^{-1}(\tau)\\
&\quad{}\times\int_0^\tau F(u)v_j(u,\beta_0)s^{(0)}(u,\beta_0)d\Lambda(u)\biggr]
\\
&\quad{}
- n^{-1/2}\sum_{i=1}^n\int_0^{t} \hat F(u) \bigl\{z_j^{(i)}-E_{j}(u,\hat\beta
)\bigr\}dM_i(u)
\\
&\quad{}
- \hat\Sigma^T_{j}(t)\hat\Sigma^{-1}(\tau)\, n^{-1/2}\sum^n_{i=1}\int^{\tau}_0\bigl\{z^{(i)}-{E}(u,\hat\beta)\bigr\}dM_i(u) +o_p(1)
\end{align*}
uniformly on $[0,\tau]$.

Note that the non-martingale part is $d$ and the martingale part of the
%development
expression is exactly
%the same in form
of the same form as in the case of the PH model
and has the same limit distribution as in latter case.
\end{proof}

The power function of the test against approaching alternatives is
\[
\beta=2-\Phi(z_{\frac{\alpha}2}-\mu)-\Phi(z_{\frac{\alpha}2}+\mu),
\]
where $\Phi$ and $z_{\frac{\alpha}2}$ are the c.d.f. and the upper
$\alpha/2$ critical value of the standard normal law, respectively.
If $d\neq0$ and $c$ is large then $\mu$ is large and the power is near
to 1.

\section{Case of several covariates}

The proportional hazards hypothesis for several covariates
$z_{j_1},\ldots,z_{j_k}$ (for all covariates $z_1,\ldots,z_m$, in particular)
is tested similarly.

Set $\bar z=(z_{j_1},\ldots,z_{j_k})^T$, $\gamma=(\gamma_{j_1},\ldots,\gamma
_{j_k})^T$.
The term $\gamma_j z_j$
is replaced by $\gamma^T \bar z$ in the model (\ref{model}).

Replacing $z^{(i)}_j$ by $\bar z^{(i)}$,
$E_{j}(t,\hat\beta)$ by $(E_{j_1}(t,\hat\beta),\ldots,E_{j_k}(t,\hat\beta
))^T$ in the statistic $\hat U_{\gamma_j}$ we obtain a statistic
denoted by $\hat U_\gamma$.
The statistic $T$ has the form (\ref{statistics}), where
the element $V_{jj}(u,\hat\beta)$ is replaced by the $k\times k$ matrix
$(V_{j_l,j_s}(u,\hat\beta))_{k\times k}$, the vector $V_{j}(u,\hat\beta
)$ is replaced by
the $m\times k$ matrix $(V_{i,j_s}(u,\hat\beta))$, $i=1,\ldots,m$,
$s=1,\ldots,k$ in the definitions of $\Sigma_{jj}$, $\Sigma_{j}$, and
$\hat U_{\gamma_j}$ is replaced by $\hat U_\gamma$.

So
$\hat\Sigma_{jj}(t)$, $\hat\Sigma_{j}(t)$, and $\hat D$ become $k\times
k$, $m\times k$, and $k\times k$ matrices, respectively,
and $U_\gamma$ becomes a $k\times1$ vector.

The test statistic
\[
T=n^{-1}\hat U^T_\gamma\hat
D^{-1}\hat U_\gamma\stackrel{\cal{D}}\rightarrow\chi^2(k)\quad\mbox{
as}\ n\to\infty.
\]
The hypothesis $H_0: \gamma=0$ is rejected with approximate
significance level $\alpha$ if $T> \chi^2_\alpha(k)$;
here $\chi^2_\alpha(k)$ is the  $\alpha$ critical value of the standard
chi-squared distribution with $k$ degrees of freedom.

\section{Real data analysis}

\begin{exmp}[Chemo-radio data, one-dimensional dichotomous
covariate]
Stablein and Koutrouvelis~\cite{Stab1985} studied the
well-known two-sample data of the Gastrointestinal Tumor Study Group concerning
effects of chemotherapy ($z=0$) and chemotherapy plus radiotherapy
($z=1$) on the survival times of gastric cancer patients.

The number of patients $n=90$. The data are right-censored.
The value of the test statistic $T$ is 3.651, the $p$-value is 0.0003.
The Cox model\index{Cox model} is rejected.
The result is natural because the Kaplan--Meier estimators of the
survival functions of two patient groups intersect.
\end{exmp}

\begin{exmp}[Prisoners data, 7 covariates]
The data are given in~\cite{Allison2010}.
They consist of 432 male inmates who
were released from Maryland state prisons in the early 1970s. These men
were followed for 1 year after their release, and the dates
of any arrests were recorded. Time is measured by the week of the first
arrest after release.
There were seven covariates: financial aid after release (FIN; 0~-- no,
1~-- yes), age in years at the time of release (AGE),
race\index{race} (RACE;\index{race} 1~-- black, 0~-- otherwise), full-time work experience before
incarceration\index{incarceration} (WEX; 1~-- yes, 0~-- no),
marital status (MAR; 1~-- was married at the time of release, 0~-- otherwise),
released on parole\index{parole} (PAR; 1~-- released on parole,\index{parole} 0~-- otherwise),
convictions prior to incarceration\index{incarceration} (PRI).

The results of testing hypothesis for all covariates: the value of the
test statistic $T$ is 17.58, the $p$-value is 0.014.
The assumption of the PH assumption is rejected.

The results for each covariate\index{covariate} are given in Table~\ref{tbl1}. The
PH assumption is rejected for AGE and WEXP covariates.\index{WEXP covariates}
\begin{table}[h]
\caption{Prisoners data. The values of test statistics and $p$-values}\label{tbl1}
\begin{tabular}{@{}lrrrrrrrr@{}}
\hline
Covar.   & FIN   & AGE   & RACE  & WEX      & MAR      & PAR      & PRI   & Glob. test \\
Stat.    & 0.162 & 2.464 & 1.423 & $-$2.033 & $-$1.017 & $-$0.222 & 0.672 & 17.58      \\
$p$-val. & 0.872 & 0.014 & 0.155 & 0.042    & 0.309    & 0.824    & 0.502 & 0.014      \\
\hline
\end{tabular}
\end{table}
\end{exmp}

\begin{exmp}[UIS dataset, 10 covariates]
Let us consider right censored UIS data set
given in~\cite{HosLemMay2008}.

UIS was a 5-year research project comprised of two concurrent
randomized trials
of residential treatment for drug abuse.\index{residential treatment for drug abuse} The purpose of the study was
to compare treatment programs of different planned durations designed
to reduce drug abuse\index{drug abuse} and to prevent high-risk HIV behavior.
The UIS sought to determine whether alternative residential treatment\index{residential treatment} approaches
are variable in effectiveness and whether efficacy depends on planned
program duration.
The time variable is time to return to drug use\index{drug use} (measured from
admission). The individuals who did not returned to drug use\index{drug use} are right censored.
We use the model with 10 covariates (which support PH assumption) given
by Hosmer, Lemeshow and May (2008).
The covariates\index{covariate} are:
AGE (years);
Beck depression score (beckt; 0~-- 54);
$\mathrm{NDR}1=((\mathrm{NDR}+1)/10)^{(-1)}$;
$\mathrm{NDR}2=((\mathrm{NDR}+1)/10)^{(-1)}\log((\mathrm{NDR}+1)/10)$;
drug use\index{drug use} history at admission
(IVHX\_3; 1~-- recent, 0~-- never or previous);
RACE\index{race} (0~-- white, 1~-- non-white);
treatment randomization assigment
(TREAT; 0~-- short, 1~-- long);
treatment site (SITE; 0~-- A, 1~-- B);
interaction of age and treatment site (AGEXS);
interaction of race\index{race} and treatment site (RACEXS\index{RACEXS}).
The NDR denotes number of prior drug treatments (0~-- 40).
Due to missing data in covariates,\index{covariate} the model is based on 575 of the 628
observations.

The results of testing hypothesis for all covariates: the value of the
test statistic $T$ is 6.781, the $p$-value is 0.7460.
The assumption of the PH assumption is not rejected.

The results for each covariate\index{covariate} are given in Table~\ref{tbl2}.

\begin{table}[h]
\caption{UIS dataset. The values of test statistics and $p$-values}\label{tbl2}
\tabcolsep=0pt
\begin{tabular*}{0.75\textwidth}{@{\extracolsep{\fill}}lrrrrrr@{}}
\hline
Covariate & AGE      & beckt & NDR1  & NDR2  & IVHX\_3 & RACE     \\
Stat.     & $-$0.061 & 1.085 & 0.182 & 0.118 & 0.912   & $-$1.278 \\
$p$-value & 0.952    & 0.278 & 0.856 & 0.906 & 0.362   & 0.201    \\
\hline
\end{tabular*}\\[9pt]
\begin{tabular*}{0.75\textwidth}{@{\extracolsep{\fill}}lrrrrr@{}}
\hline
Covariate & TREAT & SITE  & AGEXS    & RACEXS & Global test \\
Stat.     & 0.792 & 1.016 & $-$0.378 & 6.781  & $-$0.107    \\
$p$-value & 0.429 & 0.309 & 0.705    & 0.746  & 0.915       \\
\hline
\end{tabular*}
\end{table}

The assumption of the PH assumption for individual covariates is not rejected.
\end{exmp}

\begin{acknowledgement}[title={Acknowledgement}]
We thank the anonymous reviewers for their careful reading of our
manuscript and their many
insightful comments and suggestions.
\end{acknowledgement}

\end{document}